\newtheorem{theorem}{Theorem}[section]
\newtheorem{corollary}[theorem]{Corollary}
\newtheorem{lemma}[theorem]{Lemma}
\renewcommand{\epsilon}{\varepsilon}
\newcommand{\R}{\mathbb{R}}
\newcommand{\Q}{\mathbb{Q}}
\newcommand{\N}{\mathbb{N}}
\author{Edmund \sc{Weitz}\footnote{E-mail: edmund.weitz@haw-hamburg.de.}\\
Hamburg University of Applied Sciences}
\title{Hausdorff's forgotten proof that\\ almost all numbers are normal}
\date{January 31, 2021}
\begin{document}
\maketitle

\begin{abstract}
  In 1914, Felix Hausdorff published an elegant proof that almost all numbers
  are simply normal in base~2.  We generalize this proof to show that
  almost all numbers are normal.  The result is arguably the most elementary
  proof for this theorem so far and should be accessible to undergraduates in
  their first year.

  \textbf{Mathematics Subject Classification (2020):} 11K16

  \textbf{Keywords:} normal numbers
\end{abstract}

\section{Introduction}

In 1909, \'{E}mile Borel \cite{bor} introduced \textit{normal numbers} and proved
that almost all numbers are normal.\footnote{According to \cite{doo}, the
  original proof was "unmendably faulty," though.}  Today, several different
proofs of Borel's theorem exist.  This includes some that are generally
considered to be elementary, e.g., \cite{kac}, \cite{nil}, and \cite{fil}.

One of the earliest proofs can be found in Felix Hausdorff's
\textit{Grund\-z\"{u}ge der Mengenlehre} \cite{hau} from 1914.  But Hausdorff only
proved that almost all numbers are \textit{simply normal} in base~2 and then
claimed it would be "evident" that the statement was true for other bases as
well.  He didn't define normal numbers and gave no indication how to
prove a stronger version of his result.  As we will show in this article,
Hausdorff's argument isn't hard to generalize, although the way to do it might
not be totally obvious either.

Anyway, to the author's knowledge, nobody has picked up Hausdorff's elegant
idea so far.  \cite{har} and \cite{niv} contain proofs which argue along
similar lines but require more technical finesse and are less direct.

This article is intended to be accessible to undergraduates at the beginning
of their studies and we thus won't presuppose a lot of previous knowledge
except for basic combinatorics, basic calculus, and a bit of set theory (up to
the definition of \textit{countable}).  Everything else will be defined and
proved, including enough (informal) measure theory to state and prove the main
theorem.

\section{"Almost all"}

The idea of a \textit{measure} is to assign non-negative numbers to sets (of
real numbers) in such a way that these numbers can intuitively be interpreted
as the sizes of the sets.  Two obviously meaningful requirements for a measure
are that the empty set is assigned the measure zero and that the measure is
\textit{additive}: the measure of the union of two (or finitely many) sets
which are mutually disjoint must be the sum of their measures.  In order to be
useful in analysis, measures are actually required to be
\textit{$\sigma$-additive}: the above must also hold for countably many sets
(in which case the sum of the measures becomes a series).

The most important measure, and the one to be used in this article, is the
\textit{Lebesgue measure} which we'll denote with the letter~$\lambda$.  The
Lebesgue measure of an interval of real numbers is its length, e.g.,
$\lambda([1,5])=5-1=4$.  And the open interval $(1,5)$ has the same measure.
This implies that the endpoints "don't count:" finite sets like $\{1,5\}$ are
\textit{null sets}, their measure is zero.  Generally, a set is a null set if,
for every $\epsilon>0$, one can find countably many intervals such that the
set is a subset of the union of these intervals and the sum of the measures of
the intervals is at most~$\epsilon$.  An important example of a null set is
the set~$\Q$ of rational numbers.  More generally:

\begin{lemma}
  Every countable set is a null set.
\end{lemma}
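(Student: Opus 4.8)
The plan is to use the definition of null set given in the excerpt directly. Let $S$ be a countable set. If $S$ is finite the argument is essentially the same (and even simpler), so I would treat the countably infinite case, enumerating $S = \{x_1, x_2, x_3, \dots\}$. The goal is: given an arbitrary $\epsilon > 0$, produce countably many intervals whose union contains $S$ and whose measures sum to at most $\epsilon$.

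The key idea is to cover each point $x_n$ by its own tiny interval, shrinking the intervals geometrically so that the total length is controlled by a convergent geometric series. Specifically, I would enclose $x_n$ in an open interval $I_n$ of length $\epsilon / 2^n$, for instance
\[
  I_n = \left( x_n - \frac{\epsilon}{2^{n+1}},\; x_n + \frac{\epsilon}{2^{n+1}} \right),
\]
so that $\lambda(I_n) = \epsilon/2^n$. Then $S \subseteq \bigcup_{n=1}^{\infty} I_n$ since each $x_n$ lies in $I_n$, and the intervals are countably many as required.

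It remains to bound the sum of the measures. Here I would invoke the geometric series
\[
  \sum_{n=1}^{\infty} \lambda(I_n) = \sum_{n=1}^{\infty} \frac{\epsilon}{2^n} = \epsilon \sum_{n=1}^{\infty} \frac{1}{2^n} = \epsilon,
\]
which is at most $\epsilon$, exactly meeting the condition in the definition of a null set. Since $\epsilon > 0$ was arbitrary, $S$ is a null set.

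Honestly there is no serious obstacle here; the only thing to be careful about is the bookkeeping on the exponents so that the lengths sum to $\epsilon$ rather than, say, $2\epsilon$ (which would still suffice after relabeling $\epsilon$, but the clean version is nicer). Given the paper's stated audience of first-year undergraduates, I would make sure to state explicitly why the geometric series converges to $1$, and remark that the same covering works for a finite set by simply using finitely many intervals, so both cases are handled at once.
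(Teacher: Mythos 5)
Your proof is correct and follows essentially the same route as the paper: cover the $n$-th point with an interval of geometrically shrinking length so the total is $\epsilon$ (the paper indexes from $0$ with lengths $\epsilon/2^{n+1}$, you index from $1$ with lengths $\epsilon/2^n$, which is only cosmetic). The paper absorbs the finite case by allowing a non-injective enumeration, while you treat it with a brief remark; both are fine.
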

\begin{proof}
  Let $A$ be a countable set and let $(a_n)_{n\in\N}$ be an enumeration
  of~$A$.\footnote{The enumeration doesn't have to be one-to-one, so the proof
    also applies to finite sets.  Also note that in this article $0\in\N$ and
    we write $\N^+$ for the set of \textit{positive} integers.}  Furthermore,
  let $\epsilon$ be an arbitrary positive number.  For each $n\in\N$, let
  $I_n$ be an interval of length $\epsilon/2^{n+1}$ which includes $a_n$.  $A$
  is then covered by the intervals $I_n$ and we have:
  \[ \sum_{n=0}^\infty \lambda(I_n) = \sum_{n=0}^\infty \frac{\epsilon}{2^{n+1}} = \epsilon \qedhere \]
\end{proof}

\begin{lemma}\label{lem-count}
  The countable union of null sets is again a null set.
\end{lemma}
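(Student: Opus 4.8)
The plan is to reuse the geometric-series trick from the previous lemma, now applied across the \emph{index} of the family rather than across a single enumeration. Write the given countable family as $(A_k)_{k\in\N}$ and set $A=\bigcup_{k\in\N}A_k$. Let $\epsilon>0$ be arbitrary. For each fixed $k$, the set $A_k$ is a null set, so I may apply the definition with the \emph{smaller} budget $\epsilon/2^{k+1}$ in place of $\epsilon$: this yields countably many intervals $(I_{k,n})_{n\in\N}$ whose union contains $A_k$ and whose lengths satisfy $\sum_{n=0}^\infty \lambda(I_{k,n})\le \epsilon/2^{k+1}$.

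Next I would assemble all of these intervals into a single cover of $A$. The doubly indexed collection $\{I_{k,n} : k,n\in\N\}$ is a countable union of countable sets and is therefore countable, so it can be listed as one ordinary sequence of intervals. Every point of $A$ lies in some $A_k$, hence in some $I_{k,n}$, so this sequence indeed covers $A$. It remains to bound the total length, which is the double sum $\sum_{k=0}^\infty\sum_{n=0}^\infty \lambda(I_{k,n})$. Since every term is non-negative, this double series may be evaluated as the iterated sum, giving $\sum_{k=0}^\infty \sum_{n=0}^\infty \lambda(I_{k,n}) \le \sum_{k=0}^\infty \epsilon/2^{k+1} = \epsilon$, which is exactly what the definition of a null set requires.

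The one point deserving care --- and the main obstacle at this level of exposition --- is the passage from the two-index family to a single series. Two facts are needed: that a countable union of countable sets is countable (so that the combined collection is a legitimate countable cover), and that a series of non-negative terms indexed by $\N\times\N$ has a sum independent of the order of summation, so that the single-sequence total agrees with the iterated bound $\epsilon$. Both are standard, but since the target audience is first-year undergraduates I would state them explicitly rather than silently invoke them, perhaps recalling that non-negative terms can be reordered freely because their partial sums are monotone and bounded above.
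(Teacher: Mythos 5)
Your proof is correct and follows exactly the approach the paper sketches: covering the $k$-th null set with intervals of total length at most $\epsilon/2^{k+1}$ and summing the geometric series. You have simply supplied the details the paper leaves to the reader, including the two technical points (countability of the combined cover and rearrangement of non-negative double series) that justify the bound.
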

\begin{proof}
  The main idea is that, for a given~$\epsilon$, we'll cover the first null
  set with intervals which have a \textit{total} measure of at most $\epsilon/2$, the
  second one with intervals with a total measure of at most $\epsilon/4$, and
  so on, i.e., we'll again use the geometric series.  The details are left to
  the reader.
\end{proof}

We also note in passing that a subset of a null set is a null set and that
more generally the additivity of~$\lambda$ implies $\lambda(A)\leq\lambda(B)$
for $A\subseteq B$ because $B$ is the disjoint union of $A$ und
$B\setminus A$ and thus $\lambda(B)=\lambda(A)+\lambda(B\setminus A)$.

If $A$ is a set with a positive measure and $P$ is a property the elements
of~$A$ can have or not have, then we say that \textit{almost all} elements
of~$A$ have property~$P$ if the set of numbers \textit{not} having
property~$P$ is a null set.

\section{Normal numbers}

For $\alpha\in\R$ let
\[ \alpha = \sum_{j=-d}^\infty \alpha_{j,r} r^{-j} \] be its representation in
\textit{base} (or \textit{radix})~$r\in\N\setminus\{0,1\}$.  This means that
all $\alpha_{j,r}$ are \textit{digits} in this base, i.e., elements of the set
$\Sigma_r=\{0,\dots,r-1\}$.  In order to ensure uniqueness, we also require:
\begin{enumerate}[label=(\roman*)]
\item $\alpha_{-d,r}\neq0$ for $\alpha\neq0$ and $d=-1$ for $\alpha=0$
\item There is no $k$ with $\alpha_{j,r}=r-1$ for all $j\geq k$.
\end{enumerate}
We sometimes write $\alpha_j$ instead of $\alpha_{j,r}$ if $r$ is implicit.

For example, for $\alpha=\pi$ and $r=10$ we have $d=0$, $\alpha_{0}=3$,
$\alpha_{1}=1$, $\alpha_{2}=4$, and so on.  For $\beta=1/2$ and $r=2$ we have
$d=-1$, $\beta_1=1$ and $\beta_j=0$ for $j>1$.  Note that the alternative
representation $d=-2$ and $\beta_j=1$ for all~$j$ is forbidden by the second
requirement above.\footnote{We will soon see that this decision is
  irrelevant in the context of normality.}

To make up for the fact that we don't have enough digits for bases greater
than~10, we will sometimes write $[a]$ for the $a$-th digit.  For example,
$\pi$ in base $r=100$ will start like this:
\[ \pi = [3].[14][15][92]\dots \] This means that $\alpha_2$ is $[15]$, the
15th digit in base~100.\footnote{The 15th digit is usually written as
  \texttt{F} in hexadecimal notation.}

We now define for $b\in\{0,\dots,r\}$:
\begin{equation}
p_{b,n,\alpha,r} = |\{j \in \N^+ : j \leq n \land \alpha_{j,r} = b \}|\label{eq-p}
\end{equation}
This counts how often the digit~$b$ occurs among the first $n$ digits after
the radix point.\footnote{We will usually only be concerned with digits
  \textit{after} the radix point and will from now on in most cases omit the
  phrase "after the radix point."}  Again, we might omit the $r$ (and
even the $\alpha$).

As $\alpha=\pi$ starts like this
\[ 3.14159265\bm358979\bm32\bm3846264\bm3\bm38\bm3279502884197169\bm399\bm37510\dots \]
in base $r=10$, we have $p_{3,50,\pi}=8$.

A number $\alpha \in \R$ is called \textit{simply normal in base~$r$}, if
\begin{equation}\label{eq-lim}
  \lim_{n\to\infty} p_{b,n,\alpha,r} / n = 1 / r
\end{equation}
holds for all digits $b\in\Sigma_r$, i.e., if each digit occurs with the same
relative frequency "in the long run."

An example that demonstrates how this property depends on the base is the
number $1/3$ which is simply normal in base~2, but obviously \textit{not}
simply normal in the usual base~10.

$\alpha$ is called \textit{normal in base~$r$} if $r^m\alpha$ is simply normal
in base~$r^n$ for all $n\in\N^+$ and all $m\in\N$.  So, the number
$\alpha=1/3$ from above is simply normal in base~$r=2$, but it is \textit{not}
normal in this base because it is not simply normal in base~4 (using $m=0$ and
$n=2$): in base~4 we have $\alpha_j=1$ for all $j\in\N^+$ and thus the limit
in~\eqref{eq-lim} is~1 for the digit~$b=1$ and 0 for the other three digits, but
never~$1/4$.

\begin{lemma}
  If $r^m\alpha$ is simply normal in base~$r^n$ for all $n\in\N^+$ and all
  $m<n$, then $\alpha$ is normal in base~$r$.
\end{lemma}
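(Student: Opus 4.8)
The plan is to reduce the infinitely many conditions defining normality (for each $n$, simple normality of $r^m\alpha$ in base $r^n$ for \emph{every} $m\in\N$) to the finitely many hypotheses we are given (only $m<n$). The bridge is the elementary observation that, in base $R=r^n$, multiplication by $R$ merely shifts the digits behind the radix point.

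First I would fix an arbitrary $n\in\N^+$ and an arbitrary $m\in\N$ and write $m=qn+s$ with $q\in\N$ and $0\le s<n$ by division with remainder. Then $r^m=(r^n)^q\,r^s$, so setting $R:=r^n$ and $\gamma:=r^s\alpha$ gives $r^m\alpha=R^q\gamma$. Since $s<n$, the hypothesis tells us that $\gamma$ is simply normal in base $R$, so it remains only to show that $R^q\gamma$ inherits this property.

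The key step is a shift lemma: if $\gamma$ is simply normal in base $R$, then so is $R^q\gamma$ for every $q\in\N$. Here I would use that multiplication by $R^q$ moves exactly $q$ digits from behind the radix point to in front of it, so the $j$-th digit of $R^q\gamma$ equals the $(j+q)$-th digit of $\gamma$ (the uniqueness conditions (i)--(ii) are preserved because the tail of the expansion is merely shifted, not altered). Counting the occurrences of a fixed digit $b\in\Sigma_R$ among the first $N$ places then yields
\[ p_{b,N,R^q\gamma,R}=p_{b,N+q,\gamma,R}-p_{b,q,\gamma,R}, \]
since the left-hand side counts the occurrences of $b$ in positions $q+1,\dots,q+N$ of $\gamma$.

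To finish I would divide by $N$ and let $N\to\infty$. The subtracted term $p_{b,q,\gamma,R}/N$ tends to~$0$ because its numerator is a constant bounded by~$q$, while $p_{b,N+q,\gamma,R}/N=\bigl(p_{b,N+q,\gamma,R}/(N+q)\bigr)\cdot\bigl((N+q)/N\bigr)$ tends to $(1/R)\cdot 1$ by the simple normality of $\gamma$. Hence the relative frequency of $b$ in $R^q\gamma$ converges to $1/R$ for each $b\in\Sigma_R$, which is exactly simple normality of $r^m\alpha=R^q\gamma$ in base $r^n$. I don't anticipate a genuine obstacle; the only care needed is the bookkeeping in the displayed identity and the routine observation that a finite, $N$-independent correction cannot affect the limit.
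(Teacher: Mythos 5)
Your proof is correct and follows essentially the same route as the paper's: both decompose $m=qn+s$ by division with remainder and exploit that multiplication by a power of the base $R=r^n$ merely shifts the base-$R$ digit sequence, leaving the frequency limit unchanged. If anything, your ordering is slightly tidier---by factoring $r^m\alpha=R^q(r^s\alpha)$ you apply the hypothesis to $r^s\alpha$ first and then need only the pure shift fact in base $R$, with the counting identity and limit computation made explicit, whereas the paper shifts first and must additionally remark that multiplication by $r^c$ acts compatibly on the base-$r^n$ digits viewed as groups of $n$ base-$r$ digits.
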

\begin{proof}
  Let $n$ be fixed and use $r^n$ as the base.  For $m\geq n$ we find non-negative integers $c$ and~$k$ 
  with $c<n$ and $m=kn+c$, and we have:
  \[ r^m\alpha=r^c\cdot(r^n)^k\alpha \] The digits after the radix point of
  $(r^n)^k\alpha$ are the same as those of $\alpha$ beginning at the $k$-th
  digit.  And thus the digits after the radix point of~$r^m\alpha$ are the
  same as those of~$r^c\alpha$ beginning at the $k$-the digit.

  The reason for this is that the digits in base~$r^n$ are obtained by
  combining groups of $n$ digits in base~$r$.
  Multiplication with $r^c$ thus has the same effect on both sequences of
  digits.
\end{proof}

The following example demonstrates the "shift effect" described in the proof
above (for a number~$\alpha$ which is obviously \textit{not} simply normal in
base~10 or 1000):\footnote{The horizontal line (\textit{vinculum}) marks an
  infinitely repeating digit sequence.  The subscript denotes the base.}
\begin{align*}
  r&=10 \\
  n&=3 \\
  m&=7 = 2\cdot n+1\\
  \alpha&=0.[123]\overline{[345][42]}_{1000} = 0.123\overline{345042}_{10} \\
  r^m\alpha&= 10^7\cdot 0.123\overline{345042}_{10} = 1233450.\overline{423450}_{10} \\
  &= [1][233][450].\overline{[423][450]}_{1000} \\
  r^c\alpha&= 10\cdot 0.123\overline{345042}_{10} = 1.23\overline{345042}_{10} = [1].[233]\overline{[450][423]}_{1000}
\end{align*}

We will call a finite sequence $w=b_1b_2b_3\dots b_n$ of digits in
base~$r$ an \textit{$r$-word} (or simply a \textit{word}) and write $|w|$ for
its length~$n$.  For the word $\alpha_{n_1,r}\dots\alpha_{n_2,r}$ consisting
of the digits of~$\alpha$ beginning at position $n_1$ and ending at
position~$n_2$, we'll write $\alpha_{[n_1,n_2],r}$.  For an arbitrary word~$w$
we define:
\[ p_{w,n,\alpha,r} = |\{j \in \N^+ : j +|w|-1\leq n \land
  \alpha_{[j,j+|w|-1],r} = w \}| \] This number counts how often the block~$w$
of digits appears as a substring of~$\alpha_{[1,n],r}$, i.e., of the first $n$
digits of~$\alpha$.  Note that this definition agrees with~\eqref{eq-p} for
words consisting of just one digit.

As an example, consider $\alpha=0.11010111011$.  We have $p_{101,11,\alpha}=3$
which means that the word 101 occurs three times among the first 11 digits.
Note that it doesn't matter that the first two occurrences overlap.

\begin{lemma}\label{lem-freq}
  If $\alpha$ is normal in base~$r$, then we have for all $r$-words~$w$:
  \[ \lim_{n\to\infty} p_{w,n,\alpha,r} / n = 1 / r^{|w|} \]
\end{lemma}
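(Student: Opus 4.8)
The plan is to exploit the correspondence between $r$-words of length $k := |w|$ and single digits in base~$r^k$, combined with the "shift effect" established in the previous lemma. A word $w=b_1\dots b_k$ corresponds to the single base-$r^k$ digit $b^* := [w]$. The base-$r^k$ digits of $\alpha$ are obtained by grouping $k$ consecutive base-$r$ digits, but this grouping only detects occurrences of $w$ that are aligned to block boundaries. To detect occurrences at \emph{every} position, I would bring in the $k$ phase-shifted numbers $r^c\alpha$ for $c\in\{0,1,\dots,k-1\}$. By the shift effect, the base-$r^k$ digits of $r^c\alpha$ after the radix point are precisely the blocks $\alpha_{c+1}\dots\alpha_{c+k}$, then $\alpha_{c+k+1}\dots\alpha_{c+2k}$, and so on. Hence an occurrence of~$w$ at position~$j$ in~$\alpha$ is equivalent to an occurrence of the single digit~$b^*$ at a suitable position in~$r^c\alpha$, where $c=(j-1)\bmod k$.

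Next I would decompose the count. Writing $j-1=c+(i-1)k$ sets up a bijection between positions $j\in\N^+$ and pairs $(c,i)$ with $0\le c<k$ and $i\in\N^+$, which sorts all occurrences of~$w$ by their phase~$c$. This yields
\[ p_{w,n,\alpha,r} = \sum_{c=0}^{k-1} p_{b^*,M_c,\,r^c\alpha,\,r^k}, \]
where $M_c \approx \lfloor (n-c)/k\rfloor$ counts how many complete base-$r^k$ blocks of $r^c\alpha$ fit inside the first~$n$ base-$r$ digits of~$\alpha$.

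Then I would invoke normality. Since $\alpha$ is normal in base~$r$, each $r^c\alpha$ with $c<k$ is simply normal in base~$r^k$, so $p_{b^*,M,\,r^c\alpha,\,r^k}/M\to 1/r^k$. Rewriting
\[ \frac{p_{w,n,\alpha,r}}{n} = \sum_{c=0}^{k-1} \frac{p_{b^*,M_c,\,r^c\alpha,\,r^k}}{M_c}\cdot\frac{M_c}{n} \]
and using $M_c/n\to 1/k$ for every~$c$, the right-hand side tends to $k\cdot\frac{1}{r^k}\cdot\frac{1}{k}=1/r^k=1/r^{|w|}$, as claimed.

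The main obstacle I anticipate is getting the bookkeeping of the phase decomposition exactly right, especially the boundary effects: near the end of the first~$n$ digits a copy of~$w$ may be truncated, so the naive grouping over- or under-counts by a bounded amount (at most $k-1$ positions per phase). I expect to dispose of this by observing that such discrepancies are $O(k)$, independent of~$n$, and therefore vanish after dividing by~$n$; the floors hidden in $M_c$ are likewise absorbed, since only $M_c/n\to 1/k$ is needed in the limit.
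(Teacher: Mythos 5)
Your proposal is correct and is precisely the general argument that the paper only sketches by example (base $r=2$, $w=11$, using that $\alpha$ and $2\alpha$ are simply normal in base~$4$): you decompose occurrences of~$w$ by their phase~$c$ modulo $k=|w|$, identify the phase-$c$ occurrences with occurrences of the single digit~$b^*$ in the base-$r^k$ expansion of~$r^c\alpha$, and invoke simple normality of each $r^c\alpha$ in base~$r^k$, which the definition of normality supplies directly. Your handling of the boundary effects and of $M_c/n\to 1/k$ is sound, so this is a faithful formalization of the paper's intended proof rather than a different route.
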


So, if $\alpha$ is normal in some base, then each finite sequence of digits of
this base, no matter how long, appears infinitely often in the representation
of~$\alpha$ and with the same frequency "in the long run" as all other
sequences of the same length.  Which is pretty fascinating if you think about
it.  Imagine the text of your favorite book stored in a computer file and
viewed as a sequence of ones and zeros.  If $\alpha$ is normal in base~2, then
your book will appear infinitely often in the binary representation
of~$\alpha$, as will any other book---and your favorite songs as well!

One should think that such "magic" numbers are pretty rare or don't exist at
all.  But the whole purpose of this article is to prove that they are "normal"
in the sense that numbers which \textit{don't} have this strange property are
extremely scarce.  On the other hand, we don't \textit{know} many normal
numbers.  The ones we do know about were "breeded" for this purpose while the
numbers we deal with on a daily basis are either obviously not normal, like
the rational numbers, or it is unknown whether they are normal.  It is for
example an open question whether $\sqrt2$, $\pi$, or $\mathrm e$ are normal.

By the way, the property described in lemma~\ref{lem-freq} is sometimes used
to \textit{define} normality.  And it is in fact equivalent to our definition.
However, proving the equivalence requires a lot of technical effort which
we'll forego.  See \cite{niv} if you're interested.

Proving lemma~\ref{lem-freq} is not that hard, though.  But instead of a
formal proof (which would probably be confusing because of the notation),
we'll go through an example which is hopefully illuminating enough to
illustrate the general idea.\footnote{If you really need a formal proof, grab
  any normal number.  A very elegant proof, much better than anything I could
  write, can be found somewhere in its digit sequence\dots} Consider the base
$r=2$, a number~$\alpha$ normal in this base, and the word $w=11$ consisting
of two digits.  Furthermore, let $\epsilon$ be some positive real number.
Because $\alpha$ is normal in base~2, it is simply normal in base~4.  That
means we can find a number $n_1$ such that for $n\geq n_1$ approximately $n/4$
of the first $n$ digits in base~4 are the digit~3.  \textit{Approximately}
here is supposed to mean that the actual number deviates from $n/4$ by no more than
$\epsilon n$.  But that implies that among the first $2n$ digits in the base~2
representation of~$\alpha$ we will have $n/4\pm\epsilon n$ occurrences of the
word~11 (which corresponds to the digit~3 in base~4).
\begin{align*}
  \text{Base 4:} & \quad\texttt{ 0 2 1 0 0 \underline{3} 2 \underline{3} 1 \underline{3} \underline{3} 2 0 1 0 0 1 2 2 \underline{3} 0 1 \underline{3} 2 0} \\
  \text{Base 2:} & \quad\texttt{0010010000\underline{11}10\underline{11}01\underline{11}\underline{11}1000010000011010\underline{11}0001\underline{11}1000}
\end{align*}

And because $\alpha$ is normal in base~2, $2\alpha$ is also simply normal in
base~4.  Which entails that we can find a number~$n_2$ which has the same
property for~$2\alpha$ that $n_1$ has for~$\alpha$.  And we can certainly
arrange for $n_2$ to be at least as big as~$n_1$.  So, for $n\geq n_2$ we'll
again find $n/4\pm\epsilon n$ occurrences of the word~11, this time among the
first $2n$ digits of the base~2 representation of~$2\alpha$.  But that's just
the base~2 representation of~$\alpha$ shifted by one digit and so these
are \textit{new} occurrences we haven't counted yet.
\begin{align*}
  \text{Base 4:} & \quad\texttt{ 1 0 2 0 1 \underline{3} 1 2 \underline{3} \underline{3} \underline{3} 0 0 2 0 0 \underline{3} 1 1 2 0 \underline{3} \underline{3} 0 1} \\
  \text{Base 2:} & \quad\texttt{0100100001\underline{11}0110\underline{11}\underline{11}\underline{11}0000100000\underline{11}01011000\underline{11}\underline{11}0001}
\end{align*}

Combined with the ones we already had we now have $n/2\pm\epsilon 2n$ places
where 11 is a substring.  Apart from the possible deviation by~$\epsilon 2n$
that's one quarter of $2n$ base~2 digits and that's what we needed to show.

The final definition is the following: $\alpha$ is called \textit{(absolutely)
  normal} if it is normal in any integer base greater than~1.

\section{The main lemma}

The proof that almost all numbers are normal relies on a technical lemma which
generalizes a computation from~\cite[p.~420\,f]{hau}:

\begin{lemma}\label{lem-main}
  If $r$ is an integer greater than~1, then there's a positive constant~$D$ such that
  the following inequality holds for all $n\in\N^+$:
\[ \sum_{p=0}^n\binom np \frac{(r-1)^{n-p}}{r^{n}} \biggl(\frac pn-\frac 1r\biggr)^{\!4} \leq \frac{D}{n^2} \]  
\end{lemma}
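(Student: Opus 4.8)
The plan is to recognize the weights $\binom np (r-1)^{n-p}/r^n$ as an (implicit) binomial distribution and to read the whole sum as a scaled moment. Writing $q=1/r$, so that $(r-1)/r = 1-q$, I would split $r^n = r^p\, r^{n-p}$ in the denominator to obtain
\[ \binom np \frac{(r-1)^{n-p}}{r^n} = \binom np q^p (1-q)^{n-p}. \]
These are exactly the terms in the expansion of $(q+(1-q))^n = 1$, so they are non-negative and sum to $1$. Since $\bigl(\tfrac pn - \tfrac 1r\bigr)^4 = n^{-4}(p-nq)^4$, the expression on the left equals $S_n/n^4$, where
\[ S_n = \sum_{p=0}^n \binom np q^p (1-q)^{n-p}(p-nq)^4 . \]
The whole game is then to show that $S_n$ grows no faster than $n^2$; dividing by $n^4$ produces the desired order $1/n^2$.

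To evaluate $S_n$ I would first compute the \emph{falling-factorial moments} $F_k = \sum_p \binom np q^p (1-q)^{n-p}\, p(p-1)\cdots(p-k+1)$ for $k=0,1,2,3,4$. The combinatorial identity $\binom np\, p(p-1)\cdots(p-k+1) = n(n-1)\cdots(n-k+1)\binom{n-k}{p-k}$, followed by a shift of the summation index and the binomial theorem, yields the clean closed form $F_k = n(n-1)\cdots(n-k+1)\, q^k$. Expanding $(p-nq)^4 = p^4 - 4nq\,p^3 + 6n^2q^2 p^2 - 4n^3q^3 p + n^4q^4$ and rewriting each power $p^k$ through the falling factorials reduces $S_n$ to a combination of the $F_k$, i.e.\ to a polynomial in $n$. (Equivalently, and perhaps more suggestively, one may view the count as a sum of $n$ independent indicator contributions and invoke $S_n = n\,\mu_4^{(1)} + 3n(n-1)\,(\mu_2^{(1)})^2$, where $\mu_2^{(1)}=q(1-q)$ and $\mu_4^{(1)}=q(1-q)\bigl(1-3q(1-q)\bigr)$ are the second and fourth central moments of a single term.) Either route gives
\[ S_n = 3n(n-1)\,q^2(1-q)^2 + n\,q(1-q)\bigl(1 - 3q(1-q)\bigr). \]

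The decisive point here — and the reason the fourth power, rather than the square, appears in the statement — is the cancellation of the $n^4$ and $n^3$ contributions, so that the surviving leading term is only of order $n^2$. Had one used the second moment instead, $S_n$ would be of order $n$ and the resulting bound would be of order $1/n$, too weak for the later summation argument. Granting $S_n = O(n^2)$, the conclusion is immediate: $n^2 \cdot (S_n/n^4) = S_n/n^2$ tends to $3q^2(1-q)^2$ as $n\to\infty$ and is finite for every $n\in\N^+$, so it has a finite supremum over $\N^+$; taking $D = \sup_{n\geq 1} S_n/n^2$, a finite number depending only on $r$, establishes the inequality for all $n$ simultaneously. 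The only genuine labor is the bookkeeping in the moment computation, and the one thing that really must be checked there is precisely that the top-order terms drop out.
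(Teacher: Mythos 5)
Your proposal is correct, but it reaches the bound by a genuinely different route than the paper. You identify the weights $\binom np (r-1)^{n-p}/r^n = \binom np q^p(1-q)^{n-p}$ with $q=1/r$ as a binomial distribution and compute its fourth central moment $S_n$ directly, either via the factorial moments $F_k = n(n-1)\cdots(n-k+1)\,q^k$ or via the decomposition into $n$ independent Bernoulli indicators, where the crucial cancellation of the $n^4$ and $n^3$ contributions is structurally transparent: after centering, only the pure fourth powers and the products of pairs of variances survive, giving $S_n = n\,\mu_4^{(1)} + 3n(n-1)\bigl(\mu_2^{(1)}\bigr)^2 = O(n^2)$. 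The paper instead follows Hausdorff's generating-function argument: it encodes the sum as $f_0(x,y)=(x^s+y)^n$, applies the operator $x\,\frac{\partial}{\partial x} - y\,\frac{\partial}{\partial y}$ four times to bring down the factor $\bigl((s+1)p-n\bigr)^4$, and evaluates the (computer-algebra-assisted) expression for $f_4$ at the special point $x=1/\sqrt[s]{r}$, $y=(r-1)/r$, chosen precisely so that $sx^s-y=0$ annihilates the high-order terms and leaves a quadratic in $n$. The two computations agree exactly: your closed form $S_n = 3n(n-1)\,q^2(1-q)^2 + n\,q(1-q)\bigl(1-3q(1-q)\bigr)$ satisfies $r^4 S_n = 3(r-1)^2n^2 + (r^3-7r^2+12r-6)\,n$, which is the paper's $f_4(x,y)$. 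Your route buys conceptual insight (it explains \emph{why} the top orders drop out, rather than observing it in a symbolic computation) and needs no computer algebra; the paper's route buys fidelity to Hausdorff's 1914 argument and avoids importing even the language of moments, in keeping with its minimal-prerequisites aim. One small improvement to your final step: instead of taking $D=\sup_{n\geq1} S_n/n^2$ and justifying its finiteness by convergence, you can read off an explicit constant directly, namely $D = 3q^2(1-q)^2 + q(1-q)$, since $n(n-1)\leq n^2$, $n\leq n^2$, and $0 < 1-3q(1-q) \leq 1$.
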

\begin{proof}
We fix positive integers~$s$ and~$n$ and define some functions recursively:
\begin{align}
  f_0(x,y) &= \sum_{p=0}^n\binom np x^{sp} y^{n-p} \notag \\
  f_{k+1}(x,y) &= x\cdot\frac{\partial}{\partial x}f_k(x,y) - y\cdot\frac{\partial}{\partial y}f_k(x,y) \quad\quad\quad(k\in\N) \notag \\
\intertext{By working with individual summands, it is easy to check that}
  f_k(x,y) &= \sum_{p=0}^n\binom np \bigl((s+1)p-n\bigr)^k x^{sp}y^{n-p} \label{binom}
\end{align}
holds for all~$k$.

By the binomial theorem, we know that $f_0$ can also be written like this:
\[ f_0(x,y) = (x^s+y)^n \]
It is a tedious---but completely elementary---exercise to compute $f_4$ based on this representation.\footnote{As we don't live in Hausdorff's times anymore, we can use a computer algebra system.}  We get:
\begin{align*}
  f_4(x,y) &= n(x^s+y)^{n-4} Q \\
  \text{with } Q&= 6n^2(s+1)^2x^sy(sx^s-y)^2 + n^3(sx^s-y)^4 + {} \\
  &\mathrel{\phantom{=}} (s+1)^4x^sy(x^{2s}-4x^sy+y^2) + {} \\
  &\mathrel{\phantom{=}} n(s+1)^3x^sy(7(s+1)x^sy-4sx^{2s}-4y^2)
\end{align*}

We now set $r=s+1$, $x=1/\sqrt[s]r$, and $y=(r-1)/r$.
This implies $sx^s - y = 0$ and two of the four summands in~$Q$ vanish.
The remaining terms simplify to this:
\begin{align*}
  f_4(x,y) &= 3(r-1)^2n^2 + (r^3-7r^2+12r-6)n \\
\intertext{That's a second degree polynomial in~$n$ and we thus know that}
  f_4(x,y) &\leq Cn^2
\end{align*}
for some constant~$C$ independent of~$n$.

If we now replace $f_4(x,y)$ with the term from~\eqref{binom}, we get:
\[ \sum_{p=0}^n\binom np \frac{(r-1)^{n-p}}{r^{n}} (rp-n)^4 \leq Cn^2 \]
Dividing by~$(rn)^4$ yields the inequality we're after with $D=C/r^4$.
\end{proof}

\section{Almost all numbers are normal.}

For the rest of this text, we will concentrate on numbers in the interval
$[0,1)$.  We fix some base $r\geq2$.  If we look at a specific sequence of $n$
digits, then the set of numbers starting with this sequence is an interval
with Lebesgue measure~$1/r^n$.  For example, in base~10, the set of numbers
starting with the sequence~$141$ is the interval $[0.141,0.142)$ which
includes numbers like $\pi-3$.  Its measure is $1/1000$.

We now also fix a specific digit~$b$ of $\Sigma_r$.  We want to know the
measure of the set of numbers that have exactly $p$ occurrences of this digit
among their first $n$ digits.  That's also easy to compute: There are
$\binom np$ ways to pick $p$ of the available~$n$ positions.  For the
remaining $n-p$ positions we can pick any of the other $r-1$ digits and there
are $(r-1)^{n-p}$ ways to do that.  And each of the sequences thus created
results in an interval of length $1/r^n$ disjoint from all other intervals of
the same type.  The measure therefore is:
\begin{equation}\label{eq-meas1}
\binom np \cdot (r-1)^{n-p} \cdot \frac{1}{r^n}
\end{equation}

For a positive real number~$\epsilon$ we now look at the set of all numbers
where the relative frequency of $b$'s among the first $n$ digits deviates from
the "expected" value $1/r$ by at least~$\epsilon$:
\begin{align}
  M_b(n,\epsilon) &= \{\alpha \in [0,1) : |p_{b,n,\alpha,r}/n - 1/r| \geq \epsilon \} \notag \\
\intertext{With~\eqref{eq-meas1}, we can compute the measure of this set:}
  \lambda(M_b(n,\epsilon)) &= \sum_{\substack{p=0\\|p/n-1/r|\geq\epsilon}}^n \binom np \frac{(r-1)^{n-p}}{r^n} \notag \\
\intertext{Using the constant $D$ from lemma~\ref{lem-main} we get}
  \epsilon^4 \cdot \lambda(M_b(n,\epsilon)) &=  \sum_{\substack{p=0\\|p/n-1/r|\geq\epsilon}}^n\binom np \frac{(r-1)^{n-p}}{r^{n}} \cdot \epsilon^4 \notag \\
  &\leq \sum_{\substack{p=0\\|p/n-1/r|\geq\epsilon}}^n\binom np \frac{(r-1)^{n-p}}{r^{n}} \biggl(\frac pn-\frac 1r\biggr)^4 \leq \frac{D}{n^2} \notag \\
\intertext{and thus:}
  \lambda(M_b(n,\epsilon)) &\leq \frac{D}{\epsilon^4}\cdot \frac1{n^2} \label{eq-const}
\end{align}

Let $M_b(\epsilon)$ be the set of numbers $\alpha\in[0,1)$ where the relative
frequency $p_{b,n,\alpha,r}/n$ deviates from~$1/r$ by at least~$\epsilon$ \textit{for
infinitely many}~$n$.  In other words, $\alpha\in M_b(\epsilon)$ iff for each
$m\in\N$ there's an $n\geq m$ such that $\alpha\in M_b(n,\epsilon)$:
\begin{align*}
  M_b(\epsilon) &= \bigcap_{m=1}^\infty S_b(m,\epsilon) \\
  S_b(m,\epsilon) &= \bigcup_{n=m}^\infty M_b(n,\epsilon) 
\end{align*}

By~\eqref{eq-const}, we have
\[ \lambda(S_b(m,\epsilon)) \leq \sum_{n=m}^\infty \lambda(M_b(n,\epsilon))
  \leq \frac{D}{\epsilon^4} \sum_{n=m}^\infty \frac1{n^2} \] and because the
series on the right converges, the measure of $S_b(m,\epsilon)$ will become
arbitrarily small if $m$ is just big enough.  $M_b(\epsilon)$ must therefore
be a null set as it is contained in all $S_b(m,\epsilon)$.

Finally, let $M_b$ be the set of all numbers in $[0,1)$ that are not simply
normal in base~$r$ because condition~\eqref{eq-lim} is violated by at least
the digit~$b$.  By the definition of a limit, $M_b$ will look like this
\[ M_b = \bigcup_{k=1}^\infty M_b(1/k) \] and as a countable union of null
sets it is itself a null set by lemma~\ref{lem-count}.  The set of the
elements of~$[0,1)$ which are not simply normal in base~$r$ is then also a
null set as it is the union of the $r$ sets $M_0$ to~$M_{r-1}$.  We just proved:

\begin{theorem}
  If $r\geq2$ is an arbitrary base, then almost all numbers are simply normal
  in this base.\footnote{We can drop the restriction to the interval $[0,1)$
    if we want.  The proof obviously works just as well for any interval
    $[m,m+1)$ where $m$ is an integer and $\R$ is the countable union of such
    intervals.}
\end{theorem}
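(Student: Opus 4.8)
The plan is to fix the base $r\geq2$ and a single digit $b\in\Sigma_r$, and to control how rarely this digit can stray from its expected frequency $1/r$. First I would compute, for each $n$, the measure of the set of $\alpha\in[0,1)$ whose first $n$ digits contain exactly $p$ copies of $b$. As in the discussion preceding~\eqref{eq-meas1}, each admissible choice of $p$ positions together with the remaining $n-p$ digits determines a disjoint interval of length $1/r^n$, so this measure is $\binom np(r-1)^{n-p}/r^n$. Summing over those $p$ with $|p/n-1/r|\geq\epsilon$ then gives the measure of the "bad" set $M_b(n,\epsilon)$ of numbers whose frequency of $b$ in the first $n$ digits deviates from $1/r$ by at least~$\epsilon$.

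The decisive step is a Chebyshev-type inequality powered by lemma~\ref{lem-main}. On the bad range one has $(p/n-1/r)^4\geq\epsilon^4$, so multiplying the measure by $\epsilon^4$ and then dropping the restriction on $p$ only enlarges the sum; lemma~\ref{lem-main} bounds this enlarged sum by $D/n^2$, yielding $\lambda(M_b(n,\epsilon))\leq D/(\epsilon^4 n^2)$. I expect the heart of the argument to be exactly this estimate: the reason for raising the deviation to the \emph{fourth} power is that the resulting bound is summable in~$n$, whereas a crude second-moment estimate of order $1/n$ would diverge and the method would collapse. Since lemma~\ref{lem-main} is already in hand, the remaining conceptual subtlety is the passage from "bad at each fixed $n$" to "bad infinitely often."

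With summability secured, the rest is a Borel--Cantelli argument. Let $M_b(\epsilon)$ be the set of $\alpha$ that lie in $M_b(n,\epsilon)$ for infinitely many~$n$; writing it as $\bigcap_{m}\bigcup_{n\geq m}M_b(n,\epsilon)$, the tail sums $\sum_{n\geq m}D/(\epsilon^4 n^2)$ tend to~$0$, so $M_b(\epsilon)$ is contained in sets of arbitrarily small measure and is therefore a null set. Taking the union over $\epsilon=1/k$ (which captures precisely the failure of the limit~\eqref{eq-lim} for the digit~$b$) preserves nullity by lemma~\ref{lem-count}, and a final union over the $r$ digits $b\in\Sigma_r$ shows that the numbers in $[0,1)$ that fail to be simply normal in base~$r$ form a null set. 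As the footnote indicates, translating by integers and using that $\R$ is a countable union of unit intervals then removes the restriction to $[0,1)$.
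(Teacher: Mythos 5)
Your proposal is correct and follows essentially the same route as the paper's proof: the same counting argument yielding the measure $\binom np(r-1)^{n-p}/r^n$, the same fourth-moment Chebyshev estimate via lemma~\ref{lem-main} giving the bound~\eqref{eq-const}, and the same Borel--Cantelli construction $M_b(\epsilon)=\bigcap_m\bigcup_{n\geq m}M_b(n,\epsilon)$ followed by unions over $\epsilon=1/k$ and over the digits. Your remark that the fourth power is what makes the bound summable in~$n$ (a second-moment estimate of order $1/n$ would diverge) is exactly the right explanation of why the lemma is stated with exponent~4, even though the paper does not spell this out.
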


We are not quite done yet, but the rest is fairly easy.  Let's again fix a
base~$r$.  If we multiply each element of~$[0,1)$ with a factor~$r^m$ for some
$m>0$, then the set of products is "spread" over the following intervals:
\[ r^m\cdot [0,1) = [0,1) \cup [1,2) \cup \dots \cup [r^m-1,r^m) \] But as the
set of numbers not simply normal in base~$r$ in each of these intervals is a
null set (we just proved that), their union is also a null set, again by
lemma~\ref{lem-count}.

Another application of lemma~\ref{lem-count} yields that the set of numbers
$\alpha\in[0,1)$ such that $r^m\alpha$ is not simply normal in base~$r$ for at
least one~$m\in\N$ is a null set.  But the same argument also works for the
bases $r^2$, $r^3$, and so on.  Invoking lemma~\ref{lem-count} a third time we
get:

\begin{corollary}
  If $r\geq2$ is an arbitrary base, then almost all numbers are normal in this
  base.
\end{corollary}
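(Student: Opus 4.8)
The plan is to deduce the corollary from the theorem above by using that, for a fixed base $r$, normality is by definition a \emph{countable} conjunction of simply-normal conditions. Fix $r\geq2$. A number $\alpha\in[0,1)$ is normal in base~$r$ exactly when $r^m\alpha$ is simply normal in base~$r^n$ for every pair $(n,m)$ with $n\in\N^+$ and $m\in\N$. Setting
\[ B_{n,m}=\{\alpha\in[0,1): r^m\alpha\text{ is not simply normal in base }r^n\}, \]
the numbers in $[0,1)$ that are \emph{not} normal in base~$r$ form precisely the set $\bigcup_{n,m}B_{n,m}$. Since $\N^+\times\N$ is countable, lemma~\ref{lem-count} reduces everything to showing that each single $B_{n,m}$ is a null set.

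First I would fix $n$ and $m$ and write $R=r^n$, which is again an integer $\geq2$, so the theorem above applies to the base~$R$. Together with its footnote (which carries the statement over from $[0,1)$ to any unit interval $[k,k+1)$), the theorem says that the numbers failing to be simply normal in base~$R$ form a null set in each of the $r^m$ unit intervals whose union is $[0,r^m)=r^m\cdot[0,1)$. A finite union of null sets being a null set, the set of $\beta\in[0,r^m)$ that are not simply normal in base~$R$ is a null set, and $B_{n,m}$ is exactly the dilation of this set by the factor $r^{-m}$. It remains to observe that scaling by a positive constant preserves null sets: if intervals of total length at most $\epsilon$ cover the set in $[0,r^m)$, then shrinking them by $r^{-m}$ yields a cover of $B_{n,m}$ of total length at most $\epsilon/r^m\leq\epsilon$. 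Hence $B_{n,m}$ is a null set.

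Substituting this back and applying lemma~\ref{lem-count} shows that $\bigcup_{n,m}B_{n,m}$ is a null set, so almost all $\alpha\in[0,1)$ are normal in base~$r$; a final use of lemma~\ref{lem-count}, with $\R$ written as the countable union of the integer-length intervals $[k,k+1)$, removes the restriction to $[0,1)$. I do not anticipate any real obstacle, since the substantive estimates already live in lemma~\ref{lem-main} and the theorem; what needs care is only the bookkeeping of the two-index union over $(n,m)$ and the (elementary) fact that dilation preserves null sets. As a minor shortcut I could instead invoke the earlier reduction lemma to restrict to the pairs with $m<n$, but this merely thins out the index set and changes nothing essential.
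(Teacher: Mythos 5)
Your proposal is correct and takes essentially the same route as the paper: both reduce the claim to the theorem (extended to the unit intervals $[k,k+1)$ via its footnote) applied to the bases $r^n$, spread $r^m\cdot[0,1)$ over finitely many such intervals, and conclude with countable unions via lemma~\ref{lem-count}. The only differences are bookkeeping---you take a single countable union over pairs $(n,m)$ where the paper nests three applications of lemma~\ref{lem-count}---and you make explicit the (easy) fact that shrinking by the factor $r^{-m}$ preserves null sets, a step the paper leaves implicit.
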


Finally, you guessed it, we use lemma~\ref{lem-count} for the last time,
utilizing that there are only countably many bases:

\begin{corollary}
  Almost all numbers are absolutely normal.
\end{corollary}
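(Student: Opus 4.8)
The plan is to recognize that absolute normality is a conjunction, over the countably many integer bases $r\geq2$, of the individual "normal in base~$r$" conditions, so the only thing left to do is to assemble the single-base results with one more application of Lemma~\ref{lem-count}. All the real work has already been carried out in the main lemma and the preceding corollary; what remains is pure bookkeeping.

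First I would restrict attention to $[0,1)$ exactly as before. For each integer $r\geq2$, let $N_r$ be the set of $\alpha\in[0,1)$ that are \emph{not} normal in base~$r$. The corollary just proved states precisely that each $N_r$ is a null set. Next I would use the definition directly: $\alpha$ is absolutely normal if and only if it is normal in base~$r$ for \emph{every} integer $r\geq2$, so $\alpha$ fails to be absolutely normal exactly when $\alpha\in N_r$ for at least one~$r$. Hence the set of non-absolutely-normal numbers in $[0,1)$ is
\[ \bigcup_{r=2}^\infty N_r, \]
which, because there are only countably many bases, is a countable union of null sets. Lemma~\ref{lem-count} then gives at once that this union is a null set, so almost all numbers in $[0,1)$ are absolutely normal; the footnote remark that $\R$ is a countable union of translated unit intervals extends the statement to all of~$\R$.

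As for the main obstacle, there genuinely is none beyond the quantifier bookkeeping. The substantive estimates were front-loaded into Lemma~\ref{lem-main} and the single-base corollary, and the final step merely exploits the countability of the set of admissible bases. The one point that deserves a moment's care is the direction of the quantifiers: absolute normality is an \emph{intersection} over all bases, so its complement is a \emph{union} over all bases, and it is this union that Lemma~\ref{lem-count} is applied to. Once phrased in this way the argument is entirely mechanical.
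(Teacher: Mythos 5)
Your proposal is correct and follows exactly the paper's own argument: the paper likewise writes the set of non-absolutely-normal numbers as the countable union over the bases $r\geq2$ of the null sets of numbers not normal in base~$r$, and applies Lemma~\ref{lem-count} one final time. The quantifier bookkeeping you emphasize (complement of an intersection is a union) is precisely the content of the paper's one-line proof.
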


\begin{small}

\end{small}

\end{document}